\newtheorem{theorem}{Theorem}
\newtheorem{lemma}{Lemma}
\newtheorem{proposition}{Proposition}
\newtheorem{corollary}{Corollary}
\theoremstyle{definition}
\DeclareMathOperator{\diag}{diag}
\begin{document}

\begin{center}
{\huge Reduction of  matrices over simple Ore domains}
\end{center}
\vskip 0.1cm \centerline{{\Large Victor Bovdi, Bohdan Zabavsky}}

\vskip 0.3cm

\footnotesize{\noindent\textbf{Corresponding author}: V. Bovdi
\vskip 0.3cm
\footnotesize{\noindent\textbf{Address:} \textit{UAEU, Al Ain, United Arab Emirates (V.~Bovdi)
\newline
Ivan Franko National University, Lviv, Ukraine (B.~Zabavsky)}
\vskip 0.3cm
\footnotesize{\noindent\textbf{Abstract:} \textit{We study the theory of diagonal reductions of matrices over simple Ore domains of finite stable range.    We cover the cases of  $2$ -simple rings of stable range 1, Ore domains and certain cases of Bezout domains.}

\vskip 0.3cm
\footnotesize{\noindent\textbf{Keywords and phrases:} \textit{Ore domain, B\'ezout domain, stable range, $n$-simple ring, diagonal reduction of a matrix.}}
\vskip 0.3cm

\noindent{\textbf{Mathematics Subject Classification}}: 19B10, 16E50, 16U10, 16U20}}

\normalsize
\section{Introduction and results}
The problem of diagonalization of matrices over rings is a classical problem of  ring theory. An overview  can be found in  \cite{1zabava2012}. While commutative elementary divisor rings have been investigated fairly systematically, noncommutative elementary divisor rings have not received such  attention.  Nevertheless  significant results have been obtained in this field. For example,  Henriksen \cite[Theorem 3, p.\,134]{2henriks1973} showed that any matrix over an unit-regular ring can be reduced to a diagonal form by multiplications from  left and  right  by  invertible matrices of  suitable  sizes.

According to Cohn \cite[Theorem 3.6, p.\,255]{3cohn1987},   a right principal B\'ezout domain has the reduction  matrix  property, at least when  certain  conditions on the diagonal elements of its  diagonal form are satisfied.  An example of such B\'ezout domain was constructed in  \cite[Lemma, p.\,27]{4cohn1985}; is should be noted that it  is also an example for  simple B\'ezout domains, i.e. domains with trivial two-sided ideals only.

The study of the connections between  the stable range of a ring and the   reduction of  matrices over that   ring showed (see for example   \cite[Theorem 4.4.1, p.\,185]{1zabava2012})  that a simple B\'ezout domain was an elementary divisor ring if and only if it was a $2$-simple domain.

The notion of a stable range of a ring was introduced  in algebraic $K$-theory and   has been  proved  useful for the study of certain  problems in the ring theory. In particular, it was proved that the  stable range of an elementary divisor ring did not exceed 2 \cite[Theorem 1.2.40, p.\,48]{1zabava2012} and each B\'ezout domain is a Hermite ring \cite{Amitsur}.
Several important results about connections between B\'ezout domains, Hermite rings, stable range and  elementary divisor rings were obtain in the papers of Amitsur, Ara, Goodearl, Menal, Moncasi,  O’Meara, Paphael and others (see for example
\cite{Amitsur, ara2, menal, 1zabava2012}).  In the present  paper we study the   diagonal reduction of matrices over a simple Ore domain. This investigation reveals a  connection to  the theory of  full matrices over certain classes of rings.

Our main results are the following.
\begin{theorem}\label{T:1}
Let $R$ be a $2$-simple ring of stable range 1 and let  $a, b\in R$ be such that either $ab\ne 0$ or $ba\ne 0$.
The matrix   $\text{diag}(a,b)\in R^{2\times 2}$ can be reduced to the form  $\text{diag}(1,c)\in R^{2\times 2}$ for some   $c\in R$.
\end{theorem}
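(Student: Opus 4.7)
My plan is to find invertible matrices $P, Q \in \mathrm{GL}_2(R)$ so that the $(1,1)$-entry of $P \cdot \text{diag}(a,b) \cdot Q$ equals $1$. Once that is achieved, standard elementary row and column operations (each realized by further multiplication by invertible matrices) clear the remaining entries of the first row and first column, leaving a matrix of the form $\text{diag}(1,c)$. By passing to the opposite ring if necessary I may assume $ab \neq 0$, since all the hypotheses (being $2$-simple, having stable range $1$) are self-dual.

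The heart of the argument is to produce elements $x, u, y, v \in R$ with
\[
x a y + u b v = 1.
\]
Granted such elements, the row $(x,u)$ is right unimodular, since $(x,u)\bigl(\begin{smallmatrix} ay \\ bv \end{smallmatrix}\bigr) = 1$, and dually the column $(y,v)^{\top}$ is left unimodular, with left inverse $(xa,ub)$. Because $R$ has stable range $1$, every length-two unimodular row (and, dually, every length-two unimodular column) completes to an invertible $2 \times 2$ matrix, so I can choose $p, q, r, s \in R$ making
\[
P = \begin{pmatrix} x & u \\ p & q \end{pmatrix}, \qquad Q = \begin{pmatrix} y & r \\ v & s \end{pmatrix}
\]
elements of $\mathrm{GL}_2(R)$. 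A direct computation then gives $(P \cdot \text{diag}(a,b) \cdot Q)_{11} = xay + ubv = 1$, and the elimination described above finishes the reduction to $\text{diag}(1,c)$.

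The main obstacle is therefore the existence of such $x, u, y, v$, and this is precisely where the two hypotheses interact with the condition $ab \neq 0$. Since $a \neq 0$ and $b \neq 0$, the $2$-simplicity of $R$ forces the two-sided ideal jointly generated by $a$ and $b$ to be all of $R$, so that $1 = \sum_{i} x_i a y_i + \sum_{j} u_j b v_j$ for finitely many elements of $R$. The stable-range-one hypothesis then permits iterative collapsing of such sums via the standard shortening lemma for unimodular rows. The condition $ab \neq 0$ is what prevents the collapse from degenerating to a one-sided equation in which $a$ or $b$ drops out --- a case which, if it occurred, would force one of $a$ or $b$ to be a unit and would be handled trivially in any event. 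Carrying out the collapse with care yields the required two-term form $xay + ubv = 1$, and with this in place the rest of the proof is routine.
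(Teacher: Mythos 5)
Your matrix machinery is sound: once elements $x,u,y,v$ with $xay+ubv=1$ are in hand, completing the right-unimodular row $(x,u)$ and the left-unimodular column $(y,v)^{\top}$ to invertible matrices over a stable-range-one ring, and then clearing the first row and column, does reduce $\diag(a,b)$ to $\diag(1,c)$. The genuine gap is exactly at the step you defer to the end: the existence of $x,u,y,v$ with $xay+ubv=1$ is never actually proved. What you derive from the hypotheses is only that $1=\sum_i x_i a y_i+\sum_j u_j b v_j$ for some finite sum --- but this follows from simplicity alone (indeed already with no $b$-terms at all, since $RaR=R$), so at that point neither $2$-simplicity nor the hypothesis $ab\neq 0$ has been used. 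The proposed ``iterative collapsing via the shortening lemma'' cannot produce the required two-term identity: shortening the unimodular row $(x_1a,\ldots,x_ma,u_1b,\ldots,u_kb)$ by stable range $1$ replaces entries by $c_i+c_nt_i$, and an element such as $x_ia+u_kbt_i$ is no longer of the form $x'ay'$ or $u'bv'$, so the shape of the terms is destroyed after a single step and the iteration has nothing to act on. Your sketch gives no mechanism by which $ab\neq 0$ ``prevents degeneration,'' and the parenthetical claim that a degenerate outcome would force $a$ or $b$ to be a unit is also unjustified.

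The paper closes this gap by a short regrouping trick (its Lemma~\ref{L:1}), which is where $ab\neq 0$ genuinely enters: since $ab\neq 0$, the definition of $2$-simplicity applies to the single element $ab$, giving $x_1(ab)y_1+x_2(ab)y_2=1$, and rewriting each term as $x_1\cdot a\cdot(by_1)+(x_2a)\cdot b\cdot y_2$ yields precisely $u_1av_1+u_2bv_2=1$. With that identity supplied, your completion-of-unimodular-rows route is a legitimate (and arguably cleaner) alternative to the paper's continuation, which instead applies stable range $1$ twice to manufacture $xay+b\in U(R)$ and then writes down explicit elementary matrices. To repair the proposal, prove or cite the regrouping lemma; as written, the heart of the argument is missing.
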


In the case of $(n+1)$-simple domains (where $n\ge2$) we have the following.

\begin{theorem}\label{T:3}
Let $R$ be an $(n+1)$-simple Ore domain of stable range $n\in \mathbb{N}$. For each non-zero divisor  $A \in R^{(n+1) \times (n+1)}$ there  exist  $P, Q \in GL_{n+1}(R)$ and $A_0\in R^{n\times n}$ such that
 \begin{equation}\label{URAW:1}
    PAQ=
\left(\!\begin{smallmatrix}
          1 & 0 \,.\,.\,. \, 0 \\
\begin{smallmatrix}
 0\\[-5pt]\vdots\\0
\end{smallmatrix}
 & {\displaystyle A_0} \\
                \end{smallmatrix}\right).
\end{equation}
\end{theorem}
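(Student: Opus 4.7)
The plan is to produce $P,Q\in GL_{n+1}(R)$ such that $(PAQ)_{1,1}=1$, after which standard elementary operations will clear the rest of the first row and first column and deliver the block form of \eqref{URAW:1}.

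First I exploit the Ore hypothesis. Being a non-zero divisor of $R^{(n+1)\times(n+1)}$ over the Ore domain $R$, the matrix $A$ becomes invertible over the classical skew field $D$ of (right) fractions of $R$. Collecting a common right denominator for the $(n+1)^2$ entries of $A^{-1}$, I obtain $B\in R^{(n+1)\times(n+1)}$ and $d\in R\setminus\{0\}$ with $A^{-1}=Bd^{-1}$, i.e.\ the matrix identity $AB=dI_{n+1}$ over $R$. Next, I apply $(n+1)$-simplicity to the nonzero element $d$: there exist $x_1,\ldots,x_{n+1},y_1,\ldots,y_{n+1}\in R$ such that $\sum_{i=1}^{n+1} x_i d y_i=1$. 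Writing $\mathbf{x}=(x_1,\ldots,x_{n+1})$ and $\mathbf{y}=(y_1,\ldots,y_{n+1})^{T}$, this identity yields two unimodularity statements for free: $\mathbf{x}$ is a right unimodular row of length $n+1$ (since $\sum_i x_i(dy_i)=1$), and the column $B\mathbf{y}\in R^{n+1}$ is left unimodular, because $(\mathbf{x}A)(B\mathbf{y})=\mathbf{x}(AB)\mathbf{y}=\mathbf{x}(dI_{n+1})\mathbf{y}=\sum_i x_i d y_i=1$.

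At this point I invoke the classical fact that in a ring of stable range $n$ every right unimodular row and every left unimodular column of length $n+1$ can be completed to a matrix in $GL_{n+1}(R)$. Pick $P\in GL_{n+1}(R)$ with first row $\mathbf{x}$ and $Q\in GL_{n+1}(R)$ with first column $B\mathbf{y}$; the key computation then gives $(PAQ)_{1,1}=\mathbf{x}A(B\mathbf{y})=\mathbf{x}(AB)\mathbf{y}=\sum_i x_i d y_i=1$. With a $1$ in position $(1,1)$, elementary column operations clear the rest of the first row of $PAQ$ and elementary row operations clear the rest of the first column; absorbing those elementary matrices into $P$ and $Q$ produces \eqref{URAW:1}.

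The three hypotheses on $R$ enter in a tightly balanced way: the Ore condition yields the denominator $d$, $(n+1)$-simplicity expresses $1$ as a sum of \emph{exactly} $n+1$ elementary tensors $x_i d y_i$, and stable range $n$ forces completability precisely at length $n+1$. I expect the main obstacle to be this alignment of parameters: one must verify carefully that the unimodular vectors $\mathbf{x}$ and $B\mathbf{y}$ arising from $(n+1)$-simplicity really do have length $n+1$, so that the stable range $n$ completability applies simultaneously on the left and on the right; the remaining steps are routine Ore calculations and the standard clearing of a row and a column around a unit entry.
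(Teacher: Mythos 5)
Your proposal is correct and follows essentially the same route as the paper: obtain $AB=dI$ (the paper writes $AT=\diag(\varepsilon_1,\ldots,\varepsilon_{n+1})$ and invokes Lemma~\ref{L:1}, whereas you use a single common denominator $d$ and apply $(n+1)$-simplicity to it directly), extract a right unimodular row and a left unimodular column of length $n+1$ from the relation $\sum_i x_i d y_i=1$, complete both to invertible matrices via the stable range $n$ hypothesis, and clear the first row and column around the resulting unit entry. The only cosmetic difference is that your version bypasses Lemma~\ref{L:1} entirely.
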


As a consequence of Theorem~\ref{T:3} we have the following result.

\begin{theorem}\label{T:3b}
Let $R$ be a 2-simple Ore domain of stable range 1. For
each non-zero divisor matrix $A \in R^{2 \times 2}$  there  exist  $P, Q \in GL_{2}(R)$ such
that
\[
PAQ=\begin{pmatrix}
        1 & 0 \\
        0 & a
      \end{pmatrix}\quad \text{for some}\quad   a\in R.
\]
\end{theorem}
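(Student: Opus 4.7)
The plan is to obtain Theorem~\ref{T:3b} as the $n=1$ specialization of Theorem~\ref{T:3}. Substituting $n=1$ turns the hypothesis ``$(n+1)$-simple Ore domain of stable range $n$'' into ``$2$-simple Ore domain of stable range $1$'', places the matrix $A$ in $R^{2\times 2}$, the invertibles in $GL_2(R)$, and makes the residual block $A_0 \in R^{1\times 1}$ a single scalar $a \in R$. Under this identification, the block form in \eqref{URAW:1} reads $PAQ = \bigl(\begin{smallmatrix}1 & 0 \\ 0 & a\end{smallmatrix}\bigr)$, which is exactly the conclusion of Theorem~\ref{T:3b}. So the first line of attack is simply to invoke Theorem~\ref{T:3} with $n=1$.

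The one point requiring care is whether the argument establishing Theorem~\ref{T:3} goes through uniformly down to $n=1$; the preceding exposition emphasizes the case $n \geq 2$, so the small case may need a separate check. If it is not covered by the proof of Theorem~\ref{T:3}, I would fall back to a direct two-step argument. First, use the $2$-simplicity and stable-range-$1$ hypotheses, together with the Ore condition, to perform row and column operations that bring the non-zero divisor matrix $A$ into diagonal form $\mathrm{diag}(a,b)$; since $A$ is a non-zero divisor and the transforming matrices are invertible, both $a$ and $b$ are nonzero, so in particular $ab \neq 0$. Second, apply Theorem~\ref{T:1} to $\mathrm{diag}(a,b)$ to reduce further to $\mathrm{diag}(1,c)$ for some $c\in R$, and compose the two pairs of invertibles to obtain the desired $P,Q \in GL_2(R)$.

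The main obstacle, in this fallback argument, is the first step: diagonalizing a general $2 \times 2$ non-zero divisor matrix by row and column operations over an Ore domain, which need not be Bezout. The diagonalization has to be engineered from the $2$-simple and stable-range-$1$ hypotheses, which together should supply enough internal invertible elements and enough unimodular reduction to convert the left- and right-factorizations furnished by the Ore condition into genuine invertible matrices in $GL_2(R)$. Once diagonalization is achieved with $ab \neq 0$, Theorem~\ref{T:1} finishes the proof immediately.
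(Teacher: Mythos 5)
Your primary line of attack is exactly what the paper does: Theorem~\ref{T:3b} is stated there as an immediate consequence of Theorem~\ref{T:3}, obtained by setting $n=1$ so that $A_0$ becomes a single scalar, with no separate argument given. Your caveat about whether the proof of Theorem~\ref{T:3} covers $n=1$ (given the ``$n\ge 2$'' remark preceding it) is a reasonable observation, but the paper itself does not address it, so your proposal matches the paper's approach.
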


Since each B\'ezout domain is an  Ore domain \cite[Corollary 2.1.1, p.\,53]{1zabava2012} and \cite{menal},   from  Theorem~\ref{T:3}   we have the following.

\begin{theorem}\label{T:4}
Let $R$ be a $n$-simple B\'ezout  domain. If $m\ge n\in \mathbb{N}$, then  for each  $A\in R^{m\times m}$    there   exist  $P, Q\in GL_{m}(R)$ such that
$
PAQ=\left(\begin{smallmatrix}
E& 0\\
0& T\\
0&0
\end{smallmatrix}\right)$,
where $E$ is the  identity matrix, $T=\text{diag}(A_1,\ldots,A_k)$ and each   $A_i\in R^{n \times n}$ is a triangular matrix.
\end{theorem}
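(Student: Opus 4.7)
The proof will combine Amitsur's theorem that every B\'ezout domain is a Hermite ring with iterated applications of Theorem~\ref{T:3}, whose hypotheses are supplied by the fact that every B\'ezout domain is an Ore domain (\cite[Cor.~2.1.1]{1zabava2012}) together with Menal's stable-range result \cite{menal} under the $n$-simple hypothesis. The strategy is first to triangularize $A$ via the Hermite property, then to peel off identity entries one at a time using Theorem~\ref{T:3}, and finally to package the residual matrix into $\diag(A_1,\dots,A_k)$.

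\textbf{Main steps.} Using the Hermite-ring property, produce $P_0, Q_0 \in GL_m(R)$ with
\[
P_0 A Q_0 = \begin{pmatrix} B & 0 \\ 0 & 0 \end{pmatrix},
\]
where $B \in R^{r \times r}$ is upper triangular with non-zero diagonal; the bottom zero rows already realize the ``$0\ 0$'' row of the target form. Next, apply Theorem~\ref{T:3} to the leading $n \times n$ sub-block of $B$, which is a non-zero divisor (being triangular with non-zero diagonal), obtaining an equivalence with $\diag(1,C)$. Lift this equivalence to $GL_m(R)$ and use the resulting top-left $1$ to clear the rest of its row and column by elementary operations; this enlarges the identity block $E$ by one and shrinks the non-identity tail by one. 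At each subsequent stage the new leading $n \times n$ block is block upper triangular with non-zero-divisor diagonal pieces (the previous ``$C$'' and the next diagonal entry of the original $B$), so it remains a non-zero divisor and Theorem~\ref{T:3} applies again. Iterate until the tail has size $kn$ with $k = \lfloor r/n \rfloor$, after which the identity block is $E$ of size $r - kn$.

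\textbf{Main obstacle.} The delicate point is the final packaging of the residual $kn \times kn$ matrix $\tilde B$ into the required form $T = \diag(A_1, \dots, A_k)$ with each $A_i \in R^{n \times n}$ triangular. At that stage $\tilde B$ is at best block upper triangular with non-zero-divisor $n \times n$ diagonal blocks, and no further unit entries are produced to drive ordinary Gauss elimination. Clearing the off-diagonal $n \times n$ blocks of $\tilde B$ to reach genuine block-diagonal form — while simultaneously triangularizing each diagonal $A_i$ via the B\'ezout hypothesis inside the corresponding $n \times n$ window — is where the interaction between the B\'ezout and $n$-simple hypotheses must be exploited most carefully, presumably through a direct-sum decomposition of the free module $R^{kn}$ respected by $\tilde B$. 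This is the central technical step of the argument and must be arranged so that it does not disturb the already-constructed identity block $E$.
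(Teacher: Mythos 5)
Your proposal is a plan rather than a proof: the step you yourself single out as ``the central technical step'' --- converting the residual $kn\times kn$ block-upper-triangular matrix $\tilde B$ into $\diag(A_1,\dots,A_k)$ with each $A_i\in R^{n\times n}$ triangular --- is precisely the content of the theorem beyond what Theorem~\ref{T:3} already provides, and you supply no argument for it, only the remark that it should ``presumably'' follow from a direct-sum decomposition of $R^{kn}$. Nothing in the B\'ezout/Hermite hypotheses clears off-diagonal $n\times n$ blocks for free (a Hermite ring triangularizes $1\times 2$ and $2\times 1$ data; it does not block-diagonalize a block-triangular matrix), so this is a genuine gap, not a routine verification. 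There are also hypothesis mismatches in your invocations of Theorem~\ref{T:3}: that theorem is stated for $(n+1)\times(n+1)$ non-zero-divisor matrices over an $(n+1)$-simple Ore domain of stable range $n$, whereas Theorem~\ref{T:4} assumes only that $R$ is an $n$-simple B\'ezout domain, with no stable-range hypothesis. To apply Theorem~\ref{T:3} to $n\times n$ blocks you need $R$ to be $n$-simple of stable range $n-1$; B\'ezout gives Hermite and hence stable range at most $2$ by Proposition~\ref{P:2}(iii), which covers $n\ge 3$ but leaves the case $n=2$ (which would require stable range $1$) unjustified. Finally, Theorem~\ref{T:3} applies only to non-zero divisors, while Theorem~\ref{T:4} is asserted for arbitrary $A\in R^{m\times m}$; your opening reduction to $\left(\begin{smallmatrix} B & 0\\ 0 & 0\end{smallmatrix}\right)$ with $B$ triangular with nonzero diagonal is exactly the device needed to handle this, but it is asserted rather than derived from the Hermite property.

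For what it is worth, your overall strategy --- Hermite triangularization plus iterated application of Theorem~\ref{T:3}, with the B\'ezout/Hermite property responsible for the triangular shape of the $A_i$ --- is exactly the route the paper intends: its own proof consists of the single sentence that the triangularity of the $A_i$ follows from the Hermite property, with everything else delegated to the remark preceding the theorem that it is a consequence of Theorem~\ref{T:3}. So you have correctly identified the skeleton of the intended argument and, in fact, you have located the missing step more explicitly than the paper does; but neither your proposal nor the paper's text actually closes it.
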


\section{Notations  and Preliminary Results}
The set of positive integers is denoted by $\mathbb{N}$.
Let $R$ be an   associative ring with  nonzero unit and let $r,s,n\in \mathbb{N}$.  The  vector space of matrices over the ring $R$ of size ${r\times s}$  is denoted by $R^{r\times s}$.  Groups of units of the rings $R$ and $R^{n \times n}$   are denoted by $U(R)$ and $GL_n(R)$, respectively.

A ring $R$ is called  {\it right (left) B\'ezout ring} if each   finitely generated right (left) ideal of $R$  is principal. A ring $R$ which  is simultaneously  right  and  left B\'ezout ring is called  {\it B\'ezout ring}. A domain $R$ is called  {\it right (left) Ore domain} if for each  $a,b\in R\setminus\{0\}$ one has $aR\cap bR\neq \{0\}$  ($Ra\cap Rb\neq \{0\}$). Each  Ore domain is a domain that is simultaneously  right  and  left Ore domain and each B\'ezout domain is an Ore domain \cite[Proposition 1.8, p.\,53]{5stenstrom1971}.

Each  Ore domain $R$ can be embedded into  a division ring \cite[Proposition 5.2, p.\,259]{3cohn1987}, so  we can define ranks of a  matrix $A$ over $R$  on their  rows  $\rho_r(A)$ and their columns $\rho_c(A)$, respectively. Note that, the  numbers   $\rho_r(A)$ and $\rho_c(A)$ do not change under  elementary transformations of $A$.

The smallest  $m\in \mathbb{N}$ such that a matrix $A\in R^{r\times s}$ is a product of two matrices of size $r\times m$  and $m\times s$,  is called the {\it inner rank $\rho(A)$ } of $A$ \cite[p.\,244]{Cohn_book}). Note that $\rho(A)\leq \min\{\rho_r(A), \rho_c(A)\}$ and the number   $\rho(A)$ does not change under elementary transformations (see \cite[p.\,244]{Cohn_book}). If $R$ is a right B\'ezout domain, then $\rho(A)=\rho_r(A)=\rho_c(A)$ for any $A$ over $R$. A  matrix $A\in R^{n\times n}$  is called {\it full} if $\rho(A)=n$. Note that  $A\in R^{r\times s}$ is a left zero divisor in $R^{r\times s}$  if and only if $\rho_c(A)<r$  (see \cite[Collorary, p.\,245]{Cohn_book}). A square matrix $A$ over a right Ore domain is not a left zero-divisor if and only if it is a full matrix (see \cite[Proposition 5.2]{Cohn_book}).

The following  important result holds for  full matrices over $FI$-ring.

\begin{proposition}\label{P:1}\cite[Theorem 6.4]{Cohn_book}
If $R$ is  an  $FI$-ring
then  $R^{n \times n}$ is a ring with unique factorization of full matrices, i.e. for any  full matrix  is either an  invertible matrix or  is a product atoms and any two decompositions a full matrix are isomorphic.
\end{proposition}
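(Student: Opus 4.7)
The plan is to follow the classical module-theoretic approach to non-commutative unique factorization developed by Cohn. Given a full $n \times n$ matrix $A$ over the $FI$-ring $R$, I would associate to $A$ the torsion right $R$-module $M(A) = R^{1 \times n}/(R^{1 \times n})A$. Because $A$ is full and $R$ is an $FI$-ring, $M(A)$ is a bound torsion module of finite length, and the lattice of its submodules corresponds bijectively with the poset of right factorizations of $A$ taken up to stable association.

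First I would establish this factorization-submodule correspondence precisely: any factorization $A = BC$ with $B, C$ full yields a short exact sequence of right $R$-modules
\[
0 \to (R^{1 \times n})B/(R^{1 \times n})A \to M(A) \to R^{1 \times n}/(R^{1 \times n})B \to 0,
\]
and conversely, every submodule of $M(A)$ arises in this way; here the $FI$-property is used both to lift submodules back to honest matrix factorizations and to guarantee that the resulting factors are themselves full. Atomic factorizations of $A$ therefore correspond precisely to composition series of $M(A)$, with atoms matching simple subquotients.

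Next, existence of an atomic factorization follows from two ingredients. The first is the law of nullity $\rho(BC) \geq \rho(B) + \rho(C) - n$, which over an $FI$-ring forces any factor of a full matrix to remain full. The second is the chain condition on principal one-sided ideals in $R^{n \times n}$ inherited from the $FI$-structure, which ensures termination of any descending chain of non-trivial factorizations. Uniqueness is then a direct application of the Jordan--H\"older theorem to $M(A)$: two atomic factorizations $A = P_1 \cdots P_r = Q_1 \cdots Q_s$ produce two composition series of $M(A)$, and the Jordan--H\"older bijection matches their simple subquotients as isomorphic modules; translating back through the correspondence, this yields $r=s$ together with a permutation $\sigma$ under which $P_i$ is similar to $Q_{\sigma(i)}$, which is the isomorphism of decompositions claimed in the statement.

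The main obstacle is setting up the factorization-submodule correspondence in a form strong enough to (i) respect stable association on the matrix side, (ii) match atoms with simple subquotients rather than merely with arbitrary subquotients, and (iii) guarantee that $M(A)$ has finite length to start with. All three of these hinge crucially on the defining free ideal property of an $FI$-ring; without it, factors of a full matrix could drop rank and the module $M(A)$ could fail to be of finite length, so that neither existence nor uniqueness survives.
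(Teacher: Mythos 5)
The paper offers no proof of this proposition to compare against: it is quoted verbatim, with attribution, as Theorem~6.4 of Cohn's book on free ideal rings, and is used in the sequel as a black box. Your outline is, in substance, the proof given in that reference --- associate to a full matrix $A$ the bound torsion module $M(A)$, translate factorizations of $A$ into chains of submodules, and deduce uniqueness from a Jordan--H\"older argument --- so the strategy is the right one; but two points in your sketch are not merely details and would have to be repaired.

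First, the correspondence is not with the full lattice of submodules of $M(A)$, and $M(A)$ is in general \emph{not} of finite length as an $R$-module: already for the atom $x$ in a free algebra $k\langle x,y\rangle$ the module $R/xR$ admits the infinite strictly descending chain $(yR+xR)/xR\supsetneq (y^2R+xR)/xR\supsetneq\cdots$, yet it must count as ``simple'' in your scheme. Factorizations of $A$ correspond only to those submodules $N$ for which both $N$ and $M(A)/N$ are again bound torsion modules; Cohn shows that this sublattice is modular and of finite length, and the Jordan--H\"older--Dedekind theorem must be applied there (equivalently, in the abelian category of torsion modules, whose simple objects are exactly the modules of atoms), not to $M(A)$ as an $R$-module. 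Second, you have the role of the law of nullity reversed: that a factor of a full matrix is full is immediate from the trivial inequality $\rho(BC)\le\min\{\rho(B),\rho(C)\}$, whereas the law of nullity $\rho(B)+\rho(C)\le n+\rho(BC)$ is needed for the converse --- a product of full $n\times n$ matrices is again full --- which is what keeps the whole factorization theory inside the set of full matrices and makes ``product of atoms'' a meaningful target. Existence of an atomic factorization then rests on the ascending chain condition on $n$-generator one-sided ideals, which is where the free-ideal hypothesis genuinely enters; with these corrections your argument becomes the standard one.
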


A  matrix $A=(a_{ij})\in R^{n\times m}$ is called  {\it diagonal} if $a_{ij}=0$ for all $i\ne j$ and we write  it as $\diag(a_{11},\ldots,a_{nn})$.  Two matrices $A$ and $B$ over a ring $R$ is equivalent if there  exist invertible matrices $P$ and $Q$ over  $R$ such that $B=PAQ$. If a matrix $A$ over $R$ is equivalent to a diagonal matrix $D=(d_{ii})$ with the property that $d_{ii}$ is a total divisor of $d_{i+1,i+1}$ (i.e. $Rd_{i+1,i+1}R\subset d_iR\cap Rd_i$), then  we say that $A$  admits a canonical diagonal reduction. A ring $R$ over which every  matrix admits a  canonical diagonal reduction is called an {\it elementary divisor ring}.

A ring $R$ is called {\it  right (left) Hermite} if each matrix $A\in R^{1\times 2}$ ($A\in R^{2\times 1}$) admits a diagonal reduction.  A ring which is  right and  left  Hermite is called a {\it Hermite ring}. Moreover, each  elementary divisor ring is Hermite, and a right (left) Hermite ring is a right (left) B\'ezout ring \cite[p.\,298--299]{1zabava2012}.

A row $(a_1,\ldots, a_n)\in R^{n}$  is called {\it unimodular} if $a_1R+a_2R+\cdots+a_nR=R$.
An unimodular $n$-row $(a_1,\ldots, a_n)\in R^{n}$   over a ring $R$  is called {\it reducible} if there exist a $(n-1)$-row  $(b_1,\ldots, b_{n-1})\in R^{n-1}$ such that
\[
(a_1+a_nb_1,a_2+a_nb_2,\ldots, a_{n-1}+a_nb_{n-1})\in R^{n-1}
\]
is unimodular. If $n\in\mathbb{N}$ is the smallest number  such that any  unimodular $(n+1)$-row is reducible, then  $R$ has {\it stable range} $n$, where $n\geq 2$. A ring $R$ has  {\it stable range} 1 if $aR + bR = R$ implies that $(a + bt)R = R$ for some $t\in R$.

Let $R$ be a simple ring. Clearly, $RaR=R$ for each  $a\in R\setminus\{0\}$ and there  exist $n\in \mathbb{N}$ and  $u_1,  \ldots, u_n, v_1, \ldots, v_n\in R$ such that
\begin{equation}\label{URAW:2}
u_1av_1+u_2av_2+\dots+u_nav_n=1.
\end{equation}
If for all $a \in R\setminus\{0\}$, there exists a minimal  $n \in \mathbb{N}$ which satisfies  \eqref{URAW:2}, then  $R$ is called {\it $n$-simple} ring.

A ring with identity $R$ is called {\it unit-regular} if for every  $a\in R$ there is a unit $u\in U(R)$ with $a = aua$. A von Neumann regular ring  $R$ is  unit-regular if and only if $R$ has stable range 1.

In the  sequel we use freely the following results:
\begin{proposition}\label{P:2} The following conditions hold:
\begin{itemize}
\item[(i)]  \cite[Theorem 3]{2henriks1973}   Each  $2$-simple unit-regular ring is an elementary divisor ring;
\item[(ii)]\cite[Proposition 1.8]{5stenstrom1971} Each right B\'ezout domain is a right Ore domain;
\item[(iii)] \cite[Corollary 2.1.2, p.\,56]{1zabava2012}  Each right (left) Hermite ring is a ring of stable range 2;
\item[(iv)] \cite[Clorollary 2.1.5, p.\,60]{1zabava2012} If $R$ is a right B\'ezout ring of finite stable range $n\in \mathbb{N}$, then  each  right (left) unimodular row (column) of length $m\ge n+1$ is completive to an element of the subgroup $GE_m(R)$  of elementary matrices of the group $GL_m(R)$, with the addition of  extra columns and rows.
\end{itemize}
\end{proposition}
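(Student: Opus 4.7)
Since Proposition~\ref{P:2} is a compendium of four cited results from the literature, the author's ``proof'' presumably consists of nothing more than the displayed references. My proof proposal therefore takes a slightly different form: for each item I indicate the mechanism I would use if I had to reconstruct the argument, since all four items will be invoked in the main theorems and understanding the mechanism clarifies why the hypotheses are calibrated as they are.

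For part~(i), Henriksen's theorem, I would combine the two structural features independently. Unit-regularity (equivalently stable range~$1$) guarantees that every element factors as an idempotent times a unit; the $2$-simple hypothesis guarantees the ``partition of unity'' $u_1 a v_1 + u_2 a v_2 = 1$ for any nonzero $a$. Together these let one clear the $(1,1)$-entry of $\diag(a,b)$ to a unit by a two-by-two row-and-column operation, after which a diagonal reduction of the residual block finishes the argument. Observe that this is essentially the pattern that Theorem~\ref{T:1} will generalize, so a careful reading of (i) would in fact double as a warm-up for the first main result.

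For parts~(ii)--(iv) the routes are standard. Part~(ii) follows by writing $aR+bR=cR$ in a right B\'ezout domain, extracting $\alpha u + \beta v = 1$ from the domain property applied to $a=c\alpha$, $b=c\beta$, and producing a common nonzero right multiple of $a$ and $b$ from these data. Part~(iii) uses the right Hermite property to compress a unimodular triple $(a,b,c)$ to a pair via an invertible right action, and then invokes the definition of stable range to reach $\operatorname{sr}(R)\le 2$. Part~(iv) is an induction on the row length $m\ge n+1$: at each step the stable-range-$n$ hypothesis collapses one coordinate into the remaining ones by an elementary transformation, while the B\'ezout hypothesis ensures that the completing matrices produced along the way can be chosen in $GE_m(R)$ rather than merely in $GL_m(R)$.

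The genuine obstacle, in each of these reconstructions, is not algebraic depth but noncommutative bookkeeping: one must track carefully which subgroup---$GL_m(R)$ or $GE_m(R)$---contains each chosen matrix, and verify that identities familiar from the commutative setting continue to hold on the correct side, since left and right Ore/B\'ezout/Hermite assumptions are not generally interchangeable. Because the present paper uses Proposition~\ref{P:2} as a black box feeding the proofs of Theorems~\ref{T:1}--\ref{T:4}, no self-contained derivation is required here; the quoted references suffice.
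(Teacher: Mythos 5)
Your proposal matches the paper exactly: Proposition~\ref{P:2} is stated there with no proof at all, only the four literature citations, and your observation that the references themselves constitute the intended justification is correct. The mechanism sketches you add are reasonable but superfluous for the purposes of this paper, which indeed uses all four items purely as black boxes.
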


\section{Proofs}
We start our proof with the following.

\begin{lemma}\label{L:1}
Let $R$ be an $n$-simple ring. For any  $a_1, \ldots, a_n \in R$ with the property  $a_1  \cdots a_n \neq 0$,    there exist  $u_1,\ldots, u_n, v_1,\ldots, v_n \in R$  such that
\begin{equation}\label{URAW:3}
\sum_{i=1}^n u_ia_iv_i=1.
\end{equation}
\end{lemma}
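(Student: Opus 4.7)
The plan is to apply the $n$-simplicity of $R$ to the single nonzero element $a := a_1 a_2 \cdots a_n$, and then redistribute the factors of this product across the $n$ resulting summands so that each summand has exactly one $a_i$ sitting in the middle.

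Concretely, since $a_1 \cdots a_n \neq 0$ by hypothesis and $R$ is $n$-simple, I would invoke the defining property \eqref{URAW:2} for the element $a_1 \cdots a_n$ to obtain elements $u_1,\dots,u_n,v_1,\dots,v_n \in R$ with
\[
\sum_{i=1}^n u_i (a_1 a_2 \cdots a_n) v_i \;=\; 1.
\]
(If the minimal representation uses fewer than $n$ summands, I simply pad with zero summands so that the sum has exactly $n$ terms; this costs nothing.) The key observation, and really the entire content of the argument, is the associative regrouping
\[
u_i (a_1 \cdots a_n) v_i \;=\; \bigl(u_i a_1 \cdots a_{i-1}\bigr)\, a_i \,\bigl(a_{i+1} \cdots a_n v_i\bigr),
\]
where an empty product is interpreted as $1$ when $i = 1$ or $i = n$. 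Defining
\[
u_i' := u_i a_1 \cdots a_{i-1}, \qquad v_i' := a_{i+1} \cdots a_n v_i,
\]
the identity above rewrites as $\sum_{i=1}^n u_i' a_i v_i' = 1$, which is the required equation \eqref{URAW:3}.

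There is no genuine obstacle in this argument; its success hinges only on the fortunate alignment between the number of factors in the product $a_1 \cdots a_n$ and the number of summands guaranteed by $n$-simplicity. The nontrivial hypothesis that is truly used is that the full product $a_1 \cdots a_n$ is nonzero, since this is what licenses the appeal to the $n$-simple condition. It is worth noting in the write-up that the conclusion would fail if any initial product $a_1 \cdots a_k$ vanished for $k < n$, because then the regrouping could not deliver a representation with an individual $a_i$ at each position; the hypothesis $a_1 \cdots a_n \neq 0$ rules this out in the strongest possible way by ensuring no proper subproduct is zero either (in a domain).
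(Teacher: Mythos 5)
Your proof is correct and follows essentially the same route as the paper: invoke $n$-simplicity on the nonzero product $a_1\cdots a_n$ and reassociate each summand as $(u_i a_1\cdots a_{i-1})\,a_i\,(a_{i+1}\cdots a_n v_i)$; your write-up is in fact cleaner than the paper's (which has an apparent typo $u_2:=x_2a_2$ where $x_2a_1$ is meant). Your closing aside is the only slightly off note --- the regrouping identity is purely formal and the implication ``$a_1\cdots a_k=0$ for some $k<n$ forces $a_1\cdots a_n=0$'' holds in any ring, so the parenthetical ``(in a domain)'' is unnecessary --- but this does not affect the argument.
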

\begin{proof}
Since $a_1\cdots a_n\ne0$ and $R$ is  $n$-simple,
$\sum_{i=1}^n x_i(a_1\cdots a_n)y_i=1$
for some $x_1,\ldots,x_n, y_1,\ldots,y_n\in R$ by \eqref{URAW:2}.  Put  $u_1:=x_1$,\;  $u_2:=x_2a_2$, \ldots, $u_n:=x_na_1\cdots a_{n-1}$, \; $v_1:=a_2\cdots a_ny_1$, $v_2:=a_3\cdots a_ny_2$, \ldots, $v_n:=y_n$. Obviously,  \eqref{URAW:3} is a consequence of the equation  $\sum_{i=1}^n x_ia_iy_i=1$.
\end{proof}

\begin{lemma}\label{L:2}
Let $R$ be a simple elementary divisor ring. For each  $a\in R\setminus\{0\}$ there exist $u_1, u_2, v_1, v_2 \in R$ such that
\[
\text{either}\quad  u_1av_1+u_2av_2=1
\quad\text{or}\quad
u_1(1-a)v_1+u_2(1-a)v_2=1.
\]
\end{lemma}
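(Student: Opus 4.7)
The plan is to diagonalize $\text{diag}(a,1-a)$ via the elementary divisor property, extract a ``mixed'' two-term identity $u_1 a v_1 + u_2(1-a) v_2 = 1$ from the $(1,1)$-entry, and then algebraically convert it into a pure $a$-sum or pure $(1-a)$-sum through a case analysis.

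I would first dispose of trivial cases: if $a$ or $1-a$ is one-sided invertible, say $ab = 1$, then $1 \cdot a \cdot b + 0 \cdot a \cdot 0 = 1$ supplies the required representation. So henceforth assume that neither $a$ nor $1-a$ is one-sided invertible. Next, the elementary row/column operations $R_1 \to R_1 + R_2$, $C_1 \to C_1 + C_2$ (picking up $a + (1-a) = 1$ in the $(1,1)$-entry) followed by cleanup yield invertible matrices $P, Q \in GL_2(R)$ with
\[
P \cdot \text{diag}(a, 1-a) \cdot Q \;=\; \text{diag}\bigl(1, (1-a)a\bigr).
\]
Reading the $(1,1)$-entry with $P = (p_{ij})$ and $Q = (q_{ij})$, and substituting $1-a = 1-a$ in one way and $a = 1 - (1-a)$ in the other, produces the companion identities
\[
p_{11}\,a\,q_{11} - p_{12}\,a\,q_{21} \;=\; 1 - p_{12}q_{21}, \qquad (\ast)
\]
\[
p_{12}(1-a)\,q_{21} - p_{11}(1-a)\,q_{11} \;=\; 1 - p_{11}q_{11}. \qquad (\dagger)
\]

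Consequently, if $1 - p_{12}q_{21}$ admits a right inverse $\lambda$, then multiplying $(\ast)$ on the right by $\lambda$ yields $u_1 a v_1 + u_2 a v_2 = 1$ with $u_1 = p_{11}$, $v_1 = q_{11}\lambda$, $u_2 = -p_{12}$, $v_2 = q_{21}\lambda$; symmetrically, $1 - p_{11}q_{11}$ right-invertible delivers the $(1-a)$-representation via $(\dagger)$. The remaining — and principal — obstacle is to produce an admissible pair $(P, Q)$ for which at least one of these right-invertibility conditions holds. I expect to exploit two ingredients: (i) the freedom to replace $(P, Q)$ by $(UP, QV)$ where $(U, V)$ lies in the stabilizer of $\text{diag}(1, (1-a)a)$, which shifts the constants $p_{11}q_{11}$ and $p_{12}q_{21}$ within specific affine cosets determined by the other entries of $P$ and $Q$; and (ii) the simplicity of $R$, which prevents both $1 - p_{11}q_{11}$ and $1 - p_{12}q_{21}$ from being permanently trapped in proper one-sided ideals for every admissible pair. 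Verifying that these two ingredients together force right-invertibility in at least one case, using only the hypotheses of being simple and elementary divisor, is the main technical difficulty.
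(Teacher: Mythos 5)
Your reduction of $\diag(a,1-a)$ to $\diag(1,(1-a)a)$ is correct, but it is achieved by elementary row and column operations alone, so it holds in \emph{every} ring and imports nothing from the hypotheses; correspondingly, the mixed identity $p_{11}aq_{11}+p_{12}(1-a)q_{21}=1$ it produces is no stronger than the trivial identity $a+(1-a)=1$. Indeed, computing the explicit elementary matrices your operations generate gives $p_{11}=p_{12}=q_{11}=q_{21}=1$, so both of your pivot quantities $1-p_{11}q_{11}$ and $1-p_{12}q_{21}$ vanish and neither $(\ast)$ nor $(\dagger)$ yields anything. The step you defer as ``the main technical difficulty'' --- producing an admissible pair $(P,Q)$ for which one of those quantities is right invertible --- is therefore the entire content of the lemma, and the two ingredients you propose (varying $(P,Q)$ over a stabilizer coset, plus simplicity) are only sketched, not shown to deliver it. As it stands the argument has a genuine gap.

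The paper's proof uses a different matrix and, crucially, the full strength of the elementary divisor hypothesis rather than mere elementary equivalence: it reduces $\diag(a,a)$ (not $\diag(a,1-a)$) to a canonical diagonal form $\diag(z,b)$ satisfying the total divisor condition $RbR\subseteq zR\cap Rz$. Since $R$ is simple, the two-sided ideal $RbR$ is either $0$ or $R$, forcing the dichotomy: either $z\in U(R)$ or $b=0$. In the first case the relation $\diag(a,a)P=Q\diag(1,b)$ gives $ap_{11}=q_{11}$ and $ap_{21}=q_{21}$, and left unimodularity of the first column of $Q$ yields $u_1av_1+u_2av_2=1$; in the second case $ap_{12}=ap_{22}=0$, hence $p_{12}=(1-a)p_{12}$ and $p_{22}=(1-a)p_{22}$, and unimodularity of the second column of $P$ yields the $(1-a)$-identity. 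The ``either/or'' in the statement is precisely this dichotomy, and it comes from the total divisor condition together with simplicity --- data that elementary operations on $\diag(a,1-a)$ never see. To repair your argument you would need to replace the elementary reduction by the canonical one and exploit that condition.
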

\begin{proof}
If $a\in R\setminus\{0\}$, then
$\diag(a,a)\cdot P=Q\cdot \diag(z,b)$  (see the definition of $R$), in which  $z,b\in R$, $P=(p_{ij}), Q=(q_{ij})\in GL_2(R)$ and $RbR\subseteq zR\cap Rz$. Hence
\begin{equation}\label{URAW:4}
    \begin{pmatrix}
      ap_{11}& ap_{12} \\
      ap_{21} & ap_{22}
    \end{pmatrix} =\begin{pmatrix}
      q_{11}z& q_{12}b \\
      q_{21}z & q_{22}b
    \end{pmatrix}.
    \end{equation}
Since $R$ is simple, either  $z\in U(R)$  \quad or \quad   $b=0$.

\bigskip
Consider each case separately.

Case 1. Let $z\in U(R)$. We can assume $z:=1$, so from  \eqref{URAW:4} we have
\begin{equation}\label{URAW:5}
ap_{11}=q_{11}\quad\text{and}\quad  ap_{21}=q_{21}.
\end{equation}
Since  $\begin{pmatrix}
     q_{11} \\
      q_{21}
\end{pmatrix}\not=0$ as the  first column of  $Q\in GL_2(R)$,  $R=Rq_{11}+Rq_{21}$. This yields\quad   $1=uq_{11}+vq_{21}:=u_1av_1+u_2av_2$\quad  for some  $u,v\in R$  by \eqref{URAW:5}, where $u_1:=u$, $u_2:=v$, $v_1:=p_{11}$ and  $v_2:=p_{21}$.

Case 2. Let  $b=0$. Clearly $\begin{pmatrix}
      p_{12} \\
      p_{22}
\end{pmatrix}\not=0$ as the  second  column of  $P\in GL_2(R)$ and   $ap_{12}=ap_{22}=0$ by  \eqref{URAW:4}, so
\begin{equation}\label{URAW:6}
p_{12}=(1-a)p_{12}\quad\text{and}\quad  p_{22}=(1-a)p_{22}.
\end{equation}
As in the previous case,\quad   $1=xp_{12}+yp_{22}:=u_1(1-a)v_1+u_2(1-a)v_2$ \quad  for some $x,y\in R$ by \eqref{URAW:6}, where  $u_1:=x$, $u_2:=y$, $v_1:=p_{12}$ and  $v_1:=p_{22}$.
\end{proof}

\begin{corollary}\label{C-1}
Let $R$ be a simple elementary divisor ring. For each  $a\in R\setminus\{0\}$
there exist $L, M\in GL_2(R)$ and $b\in R$, such that
\[
  L\cdot \diag(a,a)\cdot M=\diag(1,b).
\]
Moreover, a simple elementary divisor domain is a  $2$-simple domain.
\end{corollary}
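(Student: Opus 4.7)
The plan is to deduce both assertions from Lemma~\ref{L:2}. Applying that lemma to the given $a \ne 0$ returns either an identity of the form $u_1 a v_1 + u_2 a v_2 = 1$ or one of the form $u_1 (1-a) v_1 + u_2 (1-a) v_2 = 1$. My first task would be to rule out the second alternative under the domain hypothesis: in the proof of Lemma~\ref{L:2} the second alternative is obtained only when the second column $(p_{12}, p_{22})^T$ of the invertible matrix $P$ satisfies $a p_{12} = a p_{22} = 0$, which, combined with $a \ne 0$, would force that column to vanish in a domain, contradicting $P \in GL_2(R)$.

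Assuming then the identity $u_1 a v_1 + u_2 a v_2 = 1$, reading it as $u_1(av_1) + u_2(av_2) = 1$ exhibits $(u_1, u_2)$ as a right unimodular row, and as $(u_1 a) v_1 + (u_2 a) v_2 = 1$ exhibits $(v_1, v_2)^T$ as a left unimodular column. Since every elementary divisor ring is Hermite, these complete to matrices $L, M \in GL_2(R)$ whose first row and first column are, respectively, $(u_1, u_2)$ and $(v_1, v_2)^T$. A direct computation shows the $(1,1)$-entry of $L \cdot \diag(a,a) \cdot M$ equals $u_1 a v_1 + u_2 a v_2 = 1$. Elementary unitriangular factors on either side, which lie in $GL_2(R)$, then clear the rest of the first row and column, yielding $\diag(1, b)$ for some $b \in R$; absorbing these factors into $L$ and $M$ finishes the first assertion.

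The ``moreover'' clause is immediate from the preceding: the argument ruling out Case~2 of Lemma~\ref{L:2} works as soon as $R$ is a domain, so in a simple elementary divisor domain the identity $u_1 a v_1 + u_2 a v_2 = 1$ holds for every $a \ne 0$, which is exactly the $2$-simple condition. The principal subtlety of the whole argument is therefore the exclusion of the second alternative of Lemma~\ref{L:2}, which is precisely where the domain hypothesis is essential; once that is settled, the rest is routine Hermite-ring manipulation.
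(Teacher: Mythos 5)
Your handling of the domain case is essentially correct and matches the paper: excluding the second alternative of Lemma~\ref{L:2} because $ap_{12}=ap_{22}=0$ together with $a\neq 0$ would kill the second column of the invertible matrix $P$ is exactly the paper's observation, and the surviving identity $u_1av_1+u_2av_2=1$ for every $a\neq 0$ is precisely $2$-simplicity, which gives the ``moreover'' clause. (Your return trip from that identity to the matrix reduction, via completion of the unimodular row $(u_1,u_2)$ and column $(v_1,v_2)^T$ in a Hermite ring, is workable but more roundabout than needed: once you are in Case 1 you already have $\diag(a,a)\cdot P=Q\cdot\diag(z,b)$ with $z\in U(R)$, and multiplying by $\diag(z,1)^{-1}$ gives $\diag(1,b)$ directly, with no completion argument and hence no worry about whether a length-$2$ unimodular row is completable.)

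The genuine gap concerns the first assertion, which is stated for an arbitrary simple elementary divisor \emph{ring}, whereas your exclusion of Case 2 uses the domain hypothesis essentially --- as you yourself emphasize. For a simple elementary divisor ring with zero divisors, the reduction $\diag(a,a)\cdot P=Q\cdot\diag(z,b)$ may well land in the branch $b=0$ (for instance $R=M_2(k)$, which is a simple elementary divisor ring by Henriksen's theorem, with $a$ a nonzero nilpotent: there $\diag(a,a)$ is equivalent to $\diag(1,0)$, so the conclusion holds only with $b=0$ and is not reached by your Case-1 argument). In that branch Lemma~\ref{L:2} hands you an identity in $1-a$, not in $a$, and your proposal simply stops; the first assertion is therefore only proved for domains. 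This is exactly the case to which the paper devotes the first half of its proof, via the back-and-forth ``if $\diag(a,a)\sim\diag(z,0)$ then $\diag(1-a,1-a)\sim\diag(1,c)$, and conversely a reduction of $\diag(1-a,1-a)$ forces $\diag(a,a)\sim\diag(1,b)$''. You need either to supply an argument of that kind or to restrict the first assertion to domains; as written, the non-domain case is missing.
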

\begin{proof}
According to  the proof of Lemma \ref{L:2}, if $\diag(a,a)\cdot P=Q\cdot  \diag(z,0)$ then $\diag(1-a,1-a)\cdot T=S\cdot  \diag(1,c)$, where   $P,Q,T,S\in GL_2(R)$ and  $c\in R$. Conversely,  if $\diag(1-a,1-a)\cdot  S=T \cdot  \diag(t,c)$, where   $T,S\in GL_2(R)$ and  $t, c\in R$,  then $\diag(a,a)\cdot P=Q\cdot  \diag(1,b)$ for some  $P,Q\in GL_2(R)$.

If $R$ is a simple elementary divisor domain, then  for each   $a\in R\setminus\{0\}$ there  exist  $u_1,u_2, v_1,v_2\in R$,  such that $u_1av_1+u_2av_2=1$ by Lemma \ref{L:2}. The case 2 (see the proof of Lemma \ref{L:2}) is impossible for  $a\in R\setminus\{0\}$, so
\[
  \diag(a,a)\cdot P=Q\cdot  \diag(1,b), \qquad (P,Q\in GL_2(R), b\in R).
\]
\end{proof}

The concept of an $(n+1)$-simple ring closely linked to the theory of rings of stable range $n$. First consider $2$-simple rings of stable range 1.

\begin{lemma}\label{L:3}
Let $R$ be a $2$-simple ring of stable range 1. For each $a\in R\setminus\{0\}$ there  exist $x,y\in R$ such that $a+xay\in U(R)$.
\end{lemma}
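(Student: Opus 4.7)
The plan is to invoke the $2$-simple hypothesis to obtain a relation $u_1 a v_1 + u_2 a v_2 = 1$, and then to apply stable range $1$ twice --- first on the right, then on the left --- in order to massage this identity into the conclusion that $a+xay$ is a unit for suitable $x,y \in R$.

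First I would observe that the equation $u_1 a v_1 + u_2 a v_2 = 1$ forces the right-unimodular condition $u_1 aR + u_2 aR = R$. Applying stable range $1$ (together with the Dedekind-finiteness of rings of stable range $1$) yields $t \in R$ such that $w := u_1 a + u_2 a t \in U(R)$. Right-multiplying the identity $w = u_1 a + u_2 a t$ by $w^{-1}$ produces a new $2$-simple-type representation $u_1 a \tilde v_1 + u_2 a \tilde v_2 = 1$, where $\tilde v_1 := w^{-1}$ is now a \emph{unit} and $\tilde v_2 := t w^{-1}$. The upshot is that, after this normalization, we may assume the first right factor in our $2$-simple representation is invertible.

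Second, from the rewritten identity we read off the left-unimodular condition $R\,a \tilde v_1 + R\,a \tilde v_2 = R$. Using the left--right symmetry of stable range (so that left stable range is also $1$), there exists $s \in R$ with $w_2 := a\tilde v_1 + s\,a\tilde v_2 \in U(R)$. Right-multiplying $w_2$ by $\tilde v_1^{-1}$, which is legitimate since $\tilde v_1$ is a unit, gives
\[
w_2\,\tilde v_1^{-1} \;=\; a + s\,a\,(\tilde v_2 \tilde v_1^{-1}) \;\in\; U(R).
\]
Setting $x := s$ and $y := \tilde v_2 \tilde v_1^{-1}$ then yields $a + xay \in U(R)$, as required.

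The main technical point is the normalization performed in the first step. A direct application of stable range $1$ to the $2$-simple identity only produces the unit $u_1 a + u_2 a t$, which is not itself of the shape $a+xay$. The key idea is to absorb this unit into the right-hand factors, obtaining a new $2$-simple representation in which one right factor is invertible; only after this normalization does the second (left-side) stable range argument, followed by right-multiplication by the inverse of the distinguished factor, cleanly deliver the required expression $a+xay$.
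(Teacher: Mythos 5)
Your proof is correct, but it follows a genuinely different route from the paper's. The paper never leaves the right-hand side: it applies the right stable range~1 condition twice, first to the pair $(u_2a,\,u_1a)$ to get $w_1=u_1at+u_2a\in U(R)$ and then to the pair $(u_2,\,u_1a)$ to get $w_2=u_1as+u_2\in U(R)$, and then eliminates $u_2=w_2-u_1as$ by substitution into the first identity, yielding $u_1a(t-sa)+w_2a=w_1$ and hence $a+\bigl(w_2^{-1}u_1\bigr)a(t-sa)=w_2^{-1}w_1\in U(R)$. You instead use one right-side application to normalize the $2$-simple representation so that $\tilde v_1=w^{-1}$ is a unit, and then a \emph{left}-side application of stable range~1 to the pair $(a\tilde v_1,\,a\tilde v_2)$, finishing by right-multiplication by $\tilde v_1^{-1}$. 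Both arguments silently or explicitly rely on Dedekind-finiteness of stable range~1 rings to upgrade one-sided invertibility to invertibility (you state this; the paper does not). The one extra ingredient your route requires is the left--right symmetry of stable range~1 (Vaserstein's theorem), since the paper defines stable range~1 only via the right-module condition; this is standard but is an import the paper's elimination trick avoids. In exchange, your normalization step is arguably more transparent about \emph{why} the unit can be pushed into the shape $a+xay$.
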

\begin{proof}
Since $R$ is  $2$-simple, for each  $a\in R\backslash\{0\}$ there exist $u_1, u_2, v_1, v_2\in R$ such that $u_1av_1+u_2av_2=1$, so  $u_1aR+u_2aR=R$. The ring  $R$ has  stable range 1, so $(u_2a+u_1at)R=R$ for some $t\in R$ and
\begin{equation}\label{URAW:7}
u_1at+u_2a=w_1\in U(R).
\end{equation}
Similarly,   $u_1aR+u_2R=R$ and      $u_1as+u_2=w_2\in U(R)$ for some  $s\in R$, so   $u_2=w_2-u_1as$. From  \eqref{URAW:7} we obtain
\[
u_1at+w_2a-u_1asa=u_1a(t-sa)+w_2a=w_1\in U(R)
\]
and $xay+a=x\in U(R)$, in which  $x:=w_2^{-1}u_1$ and  $y:=t-sa$.
\end{proof}

As a consequence of Lemma \ref{L:3}, we have the following.

\begin{lemma}\label{L:4}
Let $R$ be a $2$-simple ring of stable range 1. Each matrix   $0\neq\text{diag}(a,a)\in R^{ 2 \times 2}$ can be reduce to $\text{diag}(1,b)\in R^{2\times 2}$ for some   $b\in R$.
\end{lemma}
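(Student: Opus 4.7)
The plan is to use Lemma \ref{L:3} to produce a unit in the $(1,1)$-entry of the matrix by means of elementary row and column operations, and then to clear the rest of the first row and first column in the standard way.

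More concretely, by Lemma \ref{L:3} applied to $a\in R\setminus\{0\}$, I would fix $x,y\in R$ with $u:=a+xay\in U(R)$. Starting from $\diag(a,a)$, I would first right-multiply by $\bigl(\begin{smallmatrix} 1 & 0 \\ y & 1 \end{smallmatrix}\bigr)\in GL_2(R)$ to get $\bigl(\begin{smallmatrix} a & 0 \\ ay & a \end{smallmatrix}\bigr)$, and then left-multiply by $\bigl(\begin{smallmatrix} 1 & x \\ 0 & 1 \end{smallmatrix}\bigr)\in GL_2(R)$ to obtain
\[
\begin{pmatrix} a+xay & xa \\ ay & a \end{pmatrix}=\begin{pmatrix} u & xa \\ ay & a \end{pmatrix}.
\]
Thus the $(1,1)$-entry has been converted to the unit $u$.

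Next, I would rescale the first row by $u^{-1}$ (multiplying on the left by $\diag(u^{-1},1)\in GL_2(R)$) to bring that entry to $1$, and then use the standard elementary operations to eliminate the other entries of the first row and first column, i.e.\ subtract $u^{-1}xa$ times column $1$ from column $2$, and subtract $ay$ times row $1$ from row $2$. This produces a matrix of the form $\diag(1,b)$ with $b=a-ayu^{-1}xa\in R$, and records the equivalence as $P\cdot\diag(a,a)\cdot Q=\diag(1,b)$ for explicit $P,Q\in GL_2(R)$.

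There is no serious obstacle here: the whole argument is a bookkeeping of elementary operations, and the only nontrivial input is Lemma \ref{L:3}, which supplies the unit $u=a+xay$ needed to start the reduction. The mildly delicate point is the ordering of the operations (one must insert the $x$ and $y$ from the correct sides so that the $(1,1)$-entry becomes exactly $a+xay$ rather than some twisted expression), but once $u$ is in place, the remainder is the routine ``unit in the corner'' clean-up.
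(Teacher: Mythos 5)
Your proposal is correct and follows essentially the same route as the paper: both invoke Lemma~\ref{L:3} to get the unit $u=a+xay$ into the $(1,1)$-entry via the elementary matrices $\bigl(\begin{smallmatrix}1&x\\0&1\end{smallmatrix}\bigr)$ and $\bigl(\begin{smallmatrix}1&0\\y&1\end{smallmatrix}\bigr)$, then rescale by $u^{-1}$ and clear the first row and column. The only (immaterial) difference is that you make the resulting $b=a-ayu^{-1}xa$ explicit, where the paper leaves $S,T$ and $b$ unspecified.
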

\begin{proof}
Let $a\not=0$. Since $xay+a=u\in U(R)$  for some  $x,y\in R$  by Lemma  \ref{URAW:6},
  \[
  \begin{pmatrix}
      u^{-1}& 0 \\
      0 & 1
    \end{pmatrix}
    \cdot
  \left[
    \begin{pmatrix}
      1& x \\
      0 & 1
    \end{pmatrix}\begin{pmatrix}
      a& 0 \\
      0 & a
    \end{pmatrix}\begin{pmatrix}
      1& 0 \\
      y & 1
    \end{pmatrix}
    \right]
    =\begin{pmatrix}
      1& u^{-1}xa \\
      ay & a
    \end{pmatrix}
\]
and  $
S\begin{pmatrix}
      1& u^{-1}xa \\
      ay & a
    \end{pmatrix}\cdot T=
    \diag(1,b)$ for some  $b\in R$ and   $S,T\in GL_2(R)$.
\end{proof}

\begin{proof}[Proof of Theorem \ref{T:1}]
For any   $a,b\in R$ with  $ab\not=0$ there exist $u_1, u_2, v_1, v_2\in R$ such that $u_1av_1+u_2bv_2=1$ by Lemma \ref{L:1}. Hence  $u_1aR+u_2bR=R$ and  $(u_2b+u_1at)R=R=(u_2+u_1as)R$ for some  $s, t\in R$, because  $R$ has  stable range 1. Consequently, $u_1at+u_2b=w_1\in U(R)$  and  $u_1as+u_2=w_2\in U(R)$.  It follows that
\[
w_1=u_1at+(w_2-u_1as)b=u_1a(t-sb)+w_2b\in U(R)
\]
and $xay+b=w_2^{-1}w_1\in U(R)$, in which  $x:=w_2^{-1}u_1$ and  $y:=t-sb$.
Finally,
\[
\begin{pmatrix}
      w_1^{-1}w_2& 0 \\
      0 & 1
    \end{pmatrix}
    \cdot
\left[
    \begin{pmatrix}
      x& 1 \\
      1 & 0
    \end{pmatrix}\begin{pmatrix}
      a& 0 \\
      0 & b
    \end{pmatrix}\begin{pmatrix}
      y& 1 \\
      1 & 0
    \end{pmatrix}
\right]
    =\begin{pmatrix}
      1& w_1^{-1}w_2xa \\
      ay & a
    \end{pmatrix}
    \]
and  $
S\cdot \begin{pmatrix}
      1& w_1^{-1}w_2xa \\
      ay & a
    \end{pmatrix}\cdot T=
    \diag(1,c)$ for some  $c\in R$ and   $S,T\in GL_2(R)$.

The  case when  $ab=0$, but $ba\ne0$ can be  treated similarly.
\end{proof}

\begin{lemma}\label{L:5}
A simple unit-regular ring $R$ is an elementary divisor ring if and only if for each  idempotent $e\in R$, there exist  $u_1, u_2, v_1, v_2\in R$ such that
\[
u_1ev_1+u_2ev_2=1.
\]
\end{lemma}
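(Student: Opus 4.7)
The plan is to treat the two directions separately; each reduces to a short bookkeeping computation once the correct auxiliary result is invoked.

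For $(\Leftarrow)$, I will assume the idempotent condition holds and aim to deduce that $R$ is $2$-simple, after which Proposition~\ref{P:2}(i) immediately gives that $R$ is an elementary divisor ring. Given a nonzero $a \in R$, unit-regularity supplies a unit $u \in U(R)$ with $a = aua$, so $ua$ is an idempotent, and it is nonzero because $a \ne 0$. Applying the hypothesis to the idempotent $ua$ produces $u_1(ua)v_1 + u_2(ua)v_2 = 1$; absorbing $u$ into $u_1, u_2$ then yields the two-term decomposition $u_1' a v_1 + u_2' a v_2 = 1$, which is precisely the $2$-simple condition for $a$.

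For $(\Rightarrow)$, I will assume that $R$ is a simple unit-regular elementary divisor ring and take a nonzero idempotent $e \in R$. The key step is to apply Corollary~\ref{C-1} to $e$, which provides $L, M \in GL_2(R)$ and $b \in R$ with
\[
L \cdot \diag(e, e) \cdot M = \diag(1, b).
\]
Writing $L = (l_{ij})$ and $M = (m_{ij})$, and reading off the $(1,1)$-entry of the left-hand side, gives $l_{11} e m_{11} + l_{12} e m_{21} = 1$, which is the required two-term decomposition with $u_1 := l_{11}$, $u_2 := l_{12}$, $v_1 := m_{11}$, $v_2 := m_{21}$.

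The main obstacle in $(\Leftarrow)$ is verifying that the idempotent $ua$ is nonzero so that the hypothesis can be applied; this is immediate from $a \ne 0$ and $u \in U(R)$. The more delicate point arises in $(\Rightarrow)$, in the invocation of Corollary~\ref{C-1}: one must know that the canonical diagonal reduction of $\diag(e, e)$ has its leading entry equal to $1$ rather than producing a $\diag(z, 0)$-type form, and here the interplay between simplicity and the elementary divisor property (via Lemma~\ref{L:2} and the analysis of its two cases) is what makes the reduction land in the form needed to read off the $(1,1)$-entry identity.
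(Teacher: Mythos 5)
Your backward direction is correct and is the natural argument: from $a=aua$ you pass to the nonzero idempotent $ua$, apply the hypothesis, absorb the unit $u$ into $u_1,u_2$, and conclude that every nonzero element of $R$ admits a two-term expression, whence Proposition~\ref{P:2}(i) applies. (The paper's own proof of this lemma consists of the single sentence that it is ``a simple consequence of Lemma~\ref{L:2}'', so on this half you are considerably more explicit than the source.)

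The forward direction, however, contains a genuine gap, and you have put your finger on it yourself without closing it. You invoke Corollary~\ref{C-1} to obtain $L\cdot\diag(e,e)\cdot M=\diag(1,b)$, but the proof of that corollary only rules out the $\diag(z,0)$ outcome of Lemma~\ref{L:2} when $R$ is a \emph{domain}: there, $ap_{12}=ap_{22}=0$ with $(p_{12},p_{22})^{T}\neq 0$ forces $a=0$. A unit-regular ring is essentially never a domain, and an idempotent $e\neq 0,1$ always satisfies $e(1-e)=0$, so $\diag(e,e)$ is a zero divisor and the $\diag(z,0)$ case is precisely the one you cannot exclude. What Lemma~\ref{L:2} actually delivers for an idempotent $e$ is a two-term expression for $e$ \emph{or} for $1-e$, and a further argument is needed to convert the latter into the former; neither you nor the paper supplies one. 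Moreover, the step cannot be repaired in the stated generality: for $R=M_3(D)$ with $D$ a division ring, $R$ is simple and unit-regular, every matrix over $R$ is equivalent to a block-diagonal matrix $\diag(I_3,\dots,I_3,E_s,0,\dots,0)$ satisfying the total-divisibility chain, yet $u_1e_{11}v_1+u_2e_{11}v_2$ always has rank at most $2$ and so never equals $1$. Thus $\diag(e_{11},e_{11})$ is \emph{not} equivalent to any $\diag(1,b)$, and the appeal to Corollary~\ref{C-1} transfers the burden to an assertion whose own proof covers only the domain case and which fails outside it.
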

\begin{proof}
This  is a simple consequence of Lemma \ref{L:2}.
\end{proof}

\begin{proof}[Proof of Theorem \ref{T:3}]
According to the restrictions imposed on $R$ and  $A$ we have $AT=\diag(\varepsilon_1,\ldots, \varepsilon_{n+1})\not=0$  for some matrix $T$. Since $R$ is a $(n+1)$-simple domain, we have
\[
u_1\varepsilon_1v_1+u_2\varepsilon_2v_2+\dots+u_{n+1}\varepsilon_{n+1}v_{n+1}=0
\]
for some $u_1,\ldots, u_{n+1}, v_1,\ldots, v_{n+1}\in R$.
It follows that
\begin{equation}\label{URAW:8}
(u_1\, u_2, \ldots,  u_{n+1})\cdot A\cdot
(      w_1\, w_2, \ldots,  w_{n+1})^T=1,
    \end{equation}
in which
$(w_1,\ldots, w_{n+1})^T=T^T\cdot (u_1, \ldots , u_{n+1})^T$ and where ${(w_1,\ldots, w_{n+1})}^T$ is the transposed matrix of $(w_1,\ldots, w_{n+1})$. This yields
\[
u_1R+\cdots+u_{n+1}R=Rw_1+\cdots+Rw_{n+1}=R.
\]
Since $R$ has  stable range $n$,  the row $(u_1, \ldots,  u_{n+1})$ and the column $(w_1, \ldots,  w_{n+1})^T$ can  be completed (see Proposition  \ref{P:2}(iv)) to the following matrices
 \[
U=\left(\!\begin{smallmatrix}
          u_1&\cdots & u_{n+1} \\
             & &  \\
          \begin{smallmatrix}
          &&\\
          \end{smallmatrix}
 & {\displaystyle \star} \\
                \end{smallmatrix}\right),\quad
W=\left(\!\begin{smallmatrix}
          & \\
\begin{smallmatrix}
 w_1\\
 \vdots\\
 w_{n+1}\\
\end{smallmatrix}
 & {\displaystyle \star} \\
                \end{smallmatrix}\right) \in GE_{n+1}(R).
\]
Finally,  $UAW=
    \left(\!\begin{smallmatrix}
          1 & * \,\cdots \, * \\
\begin{smallmatrix}
 *\\[-5pt]\vdots \\ *\\
\end{smallmatrix}
 & {\displaystyle B} \\
                \end{smallmatrix}\right)$ by \eqref{URAW:8} and
using  elementary transformations of rows and columns it can be transformed to the  form \eqref{URAW:2}.\end{proof}

\begin{proof}[Proof of Theorem \ref{T:4}]    The fact that each   $A_i\in R^{n \times n}$ is a triangular matrix  follows from the fact that  each commutative B\'ezout domain is a Hermite ring see \cite[p.\,29-30]{1zabava2012}.
\end{proof}

\newpage

\end{document}